\numberwithin{equation}{section}
\newtheorem{theo}{Theorem}
\newtheorem{lem}{Lemma}
\theoremstyle{remark}
\def\al{\alpha}
\def\be{\beta}
\def\de{\delta}
\def\ep{\varepsilon}
\def\({\left(}
\def\){\right)}
\def\[{\left[}
\def\]{\right]}
\def\ii{\infty}
\def\lcm{\operatorname{lcm}}
\def\dd{\textup{d}}
\def\cl#1{\lceil#1\rceil}
\begin{document}

\title{On a linear form for Catalan's constant}

\author{C. Krattenthaler$^\dagger$ and T. Rivoal}
\date{}

\address{C. Krattenthaler, Fakult\"at f\"ur Mathematik, Universit\"at Wien,
Nordbergstra{\ss}e~15, A-1090 Vienna, Austria.
WWW: \tt http://www.mat.univie.ac.at/\~{}kratt.}

\thanks{$^\dagger$Research partially supported 
by the Austrian Science Foundation FWF, grant S9607-N13,
in the framework of the National Research Network
``Analytic Combinatorics and Probabilistic Number Theory"}

\address{T. Rivoal,
Institut Fourier,
CNRS UMR 5582, Universit{\'e} Grenoble 1,
100 rue des Maths, BP~74,
38402 Saint-Martin d'H{\`e}res cedex,
France.\newline
WWW: \tt http://www-fourier.ujf-grenoble.fr/\~{}rivoal.}

\dedicatory{Dedicated to George Andrews}

\subjclass[2000]{Primary 11J72;
 Secondary 11J82, 33C20}

\keywords{Catalan's constant, linear forms, hypergeometric series,
Andrews' identity}

\begin{abstract}
It is shown how Andrews' multidimensional extension
of Watson's transformation between
a very-well-poised $_8\phi_7$-series and a balanced $_4\phi_3$-series
can be used to give a straightforward proof of a conjecture of Zudilin
and the second author on the arithmetic behaviour of the coefficients of
certain linear forms of $1$ and Catalan's constant. This proof is
considerably simpler and more stream-lined than the first proof,
due to the second author. 
\end{abstract}

\maketitle

\section{Introduction}

Andrews' multidimensional extension
\cite[Theorem~4]{AndrAE} of Watson's transformation between
a very-well-poised $_8\phi_7$-series and a balanced $_4\phi_3$-series
\cite[(2.5.1); Appendix (III.18)]{GaRaAA} in its full beauty reads
\begin{multline} \label{eq:Andrq}
\sum _{k=0} ^{n}
\frac {
(a;q)_k\,(q\sqrt a;q)_k\,(-q\sqrt a;q)_k\,( b_1;q)_k\,(  c_1;q)_k \cdots (b_{m+1};q)_k\, 
( c_{m+1};q)_k\,( q^{-n} ;q)_k}
  {(   \sqrt a;q)_k\,(-\sqrt a;q)_k\,( qa/b_1;q)_k\,( qa/c_1;q)_k \cdots 
  ( qa/b_{m+1};q)_k\,( qa/c_{m+1};q)_k\,(q^{n+1}a;q)_k} \\
\cdot
\left(\frac {a^{m+1}q^{m+1+n}} {b_1c_1\cdots
b_{m+1}c_{m+1}}\right)^k\\
=
\frac{(qa;q)_n\,(qa/b_{m+1}c_{m+1};q)_n}{(qa/b_{m+1};q)_n\,(qa/c_{m+1};q)_n}
\sum _{0\le i_1\le i_2\le\dots\le i_{m}\le n} ^{}
\frac {a^{i_1+\dots+i_{m-1}}q^{i_1+\dots+i_m}} {(b_2c_2)^{i_1}\cdots
(b_mc_m)^{i_{m-1}}}\kern2cm\\
\cdot\frac{(q^{-n};q)_{i_m}}{(b_{m+1}c_{m+1}/aq^n;q)_{i_m}}
\prod_{k=1}^m
\frac{(qa/b_{k}c_k;q)_{i_k-i_{k-1}}\,(b_{k+1};q)_{i_k}\,(c_{k+1};q)_{i_k}}
{(q;q)_{i_k-i_{k-1}}\,(qa/b_k;q)_{i_k}\,(qa/c_k;q)_{i_k}},
\end{multline}
where, by definition, $i_0:=0$.
Here, $(\al;q)_k=(1-\al)(1-\al q)\cdots(1-\al q^{k-1})$ if $k\ge1$
and $(\al;q)_0=1$.
This formula has found important applications to the theory of
partition identities (see \cite{AndrAE}).

Remarkably, Andrews' formula has started a surprising new life
recently. Its utility for proving arithmetic properties of
coefficients of certain linear forms for values of the Riemann zeta
function at integers was discovered by the authors in \cite{KrRiAA}, 
and was also exploited in \cite{KrRiAB} for proving the equality of
certain multiple integrals and hypergeometric series. Closely related
are the applications given by Zudilin in \cite{ZudiAN,ZudiAJ}. 
The afore-mentioned articles make actually ``only" use of the $q=1$
special case of \eqref{eq:Andrq} (see \eqref{eq:Andr} below for the
explicit statement of that special case).
The line of argument developed in \cite{KrRiAA} has been extended to
the $q$-case by Jouhet and Mosaki in \cite{JoMoAA} to establish 
irrationality results for values of a $q$-analogue of the zeta
function. Moreover, Guo, Jouhet and Zeng \cite{GuJZAA} have extended Zudilin's work in
\cite{ZudiAN} to the $q$-case, together with further applications of
Andrews' formula \eqref{eq:Andrq}. In a completely different field,
Beliakova, B\"uhler and L\^e \cite{BeBuAA,BeLeAA,LeTQAA} have
exploited \eqref{eq:Andrq} in the study of quantum invariants of
manifolds. Finally, Andrews himself returned to his identity after
over 30 years to prove deep partition theorems in \cite{AndrCM}.

The purpose of the present paper is to add another item to this list of
applications of Andrews' formula. More precisely, we show how the
ideas from \cite{KrRiAA} lead to an alternative proof of a conjecture 
from \cite{RiZuAA} on the arithmetic behaviour of the coefficients in
certain linear forms of $1$ and {\it Catalan's constant\/}
$G=\sum _{k=1} ^{\infty}\frac {(-1)^{k-1}} {(2k-1)^2}$. 
It is considerably simpler and more stream-lined than the first proof
\cite{RivoAG} by one of the authors, which used a somewhat indirect method
based on Pad\'e approximations. A partial, ``asymptotic," proof had been given
earlier by Zudilin in \cite{zu6}.

We give a precise statement of the conjecture in the next section,
where we also derive explicit expressions for the coefficients $\mathbf a_n$ and
$\mathbf b_n$ in the linear forms of $1$ and Catalan's constant. The
arithmetic claim for the coefficient $\mathbf a_n$ is then proved in
Section~\ref{sec:3} with the help of a limit case of Whipple's
transformation between a very-well-poised $_7F_6$-series and a
balanced $_4F_3$-series (the latter being the $q=1$ special case of the
afore-mentioned transformation formula of Watson). The
arithmetic claim for the coefficient $\mathbf b_n$ is proved in
Section~\ref{sec:4} with the help of the $q=1$ special case of
Andrews' formula \eqref{eq:Andrq}, given explicitly in
\eqref{eq:Andr}. 

\section{A linear form for Catalan's constant}
\label{sec:2}

Let us consider the series
\begin{equation} \label{eq:linG}
{\mathbf G}_n=n!\sum_{k=1}^{\infty}(-1)^k\bigg(k+\frac{n-1}{2} \bigg) \,
\frac{(k-n)_n(k+n)_n}{\left(k-\frac 12\right)_{n+1}^3},
\end{equation}
where the {\it Pochhammer symbol\/} $(\al)_k$ is defined by
$(\al)_k=\alpha(\alpha+1)\cdots(\alpha+k-1)$ if $k\ge 1$ and
$(\al)_0=1$.
By applying a partial fraction decomposition with respect to $k$ to
the summand, and by performing the appropriate summations, it is not
difficult to see (cf.\ \cite[Sec.~1.4]{fi} for details on this kind of
calculation) that
$$
{\mathbf G}_n={\bf a}_nG-{\bf b}_n,
$$
where
\begin{multline} \label{eq:an1}
{\mathbf a}_n=4(-1)^{n-1}\sum _{j=0} ^{n}\frac {\partial} {\partial \ep}
\Bigg(\(\frac {n} {2}-j+\ep\)
\(\frac {n!} { (1-\ep)_j \, (1+\ep)_{n-j}}\)^3\\
\cdot
\binom {n+j-\ep-\frac {1} {2}}n\binom
{2n-j+\ep-\frac {1} {2}}n\Bigg)\Bigg\vert_{\ep=0},
\end{multline}
and
\begin{multline} \label{eq:bn1}
{\mathbf b}_n=(-1)^n\sum _{j=0} ^{n}\sum _{e=1} ^{3}
\frac {1} {(3-e)!}\frac {\partial^{3-e}} {\partial j^{3-e}}
\Bigg(\(\frac {n} {2}-j+\ep\)
\(\frac {n!} { (1-\ep)_j \, (1+\ep)_{n-j}}\)^3\\
\cdot
\binom {n+j-\ep-\frac {1} {2}}n\binom
{2n-j+\ep-\frac {1} {2}}n\Bigg)\Bigg\vert_{\ep=0}
\sum _{k=1} ^{j}\frac {(-1)^{k}}
{\(k-\frac {1} {2}\)^e}.
\end{multline}
Writing $\textup d_n$ for $\lcm(1,2,\dots,n)$,
it is easy to see by a standard approach (see \cite[Sec.~5]{RiZuAA}) that 
$2^{4n}\textup d_{2n}\mathbf a_n$ and
$2^{4n}\textup d_{2n}^3\mathbf b_n$ are integers. 
Based on computer calculations, the second author and Zudilin
conjectured however (cf.\ \cite[p.~720]{RiZuAA}) that in fact even
$2^{4n}\mathbf a_n$ and
$2^{4n}\textup d_{2n}^2\mathbf b_n$ are integers. 
While this is still too weak for proving the irrationality of
Catalan's constant $G$, it is nevertheless an interesting and 
non-obvious observation
which we shall prove in the two subsequent sections. This proof
makes use of identities for ({\it generalised\/}) {\it hypergeometric
series}, the latter being defined by
\begin{equation*}
{}_{q+1}F_q\!\left [ \begin{matrix}
{ \alpha_0,\alpha_1,\ldots,\alpha_{q}}\\
{\beta_1,\ldots,\beta_q}\end{matrix} ;
{\displaystyle z}\right ]
=\sum_{k=0}^{\ii}
\frac{(\alpha_0)_k\,(\alpha_1)_k\cdots(\alpha_{q})_k}
{k!\,(\beta_1)_k\cdots(\beta_q)_k} z^k.
\label{eq:hyper}
\end{equation*}
As we already
mentioned in the Introduction, an earlier (but more involved) proof
is due to one of the authors \cite{RivoAG}.

\section{The coefficient ${\mathbf a}_n$}
\label{sec:3}

The purpose of this section is to prove the following theorem.

\begin{theo} \label{thm:an}
For all positive integers $n$, the number $2^{4n}\mathbf a_n$ is an
integer.
\end{theo}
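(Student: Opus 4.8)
The plan is to recognise the sum defining $\mathbf a_n$ in \eqref{eq:an1} as the $\ep$-derivative at $\ep=0$ of a very-well-poised hypergeometric sum, and then to use a limit case of Whipple's $_7F_6$-to-$_4F_3$ transformation to re-expand that sum as a \emph{balanced} $_4F_3$-type sum whose summand manifestly has denominators that are only powers of $2$ times products of binomial coefficients. More precisely, introduce the auxiliary quantity
\begin{equation*}
A_n(\ep)=\sum_{j=0}^{n}\(\frac n2-j+\ep\)\(\frac{n!}{(1-\ep)_j\,(1+\ep)_{n-j}}\)^3
\binom{n+j-\ep-\tfrac12}{n}\binom{2n-j+\ep-\tfrac12}{n},
\end{equation*}
so that $\mathbf a_n=4(-1)^{n-1}A_n'(0)$. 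The first step is to rewrite the summand of $A_n(\ep)$ in Pochhammer notation; the factor $\bigl(\tfrac n2-j+\ep\bigr)$ together with the cubes of the reciprocal Pochhammer symbols and the two binomials should assemble, after shifting the summation index, into the summand of a terminating very-well-poised $_7F_6$-series (the $q=1$ case of a very-well-poised $_8\phi_7$) evaluated at a specific argument, with parameters depending on $n$ and $\ep$.

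The second step is to apply Whipple's transformation (the $q=1$, $m=1$ specialisation of Watson's transformation referred to just before the statement) to this $_7F_6$, turning $A_n(\ep)$ into the product of a ratio of Pochhammer symbols in $\ep$ and $n$ times a terminating balanced $_4F_3$-series. The point of this manoeuvre is arithmetic: on the balanced side, the summand is a \emph{single} product of binomial coefficients (times a rational prefactor), so that after differentiating in $\ep$ at $\ep=0$ by the product rule, each resulting term is, up to an explicit power of $2$, an integer times at worst one harmonic-type sum $\sum 1/(k-\tfrac12)$ or $\sum 1/k$ — and here is the crucial gain, \emph{these harmonic sums will cancel or be absent because the balanced $_4F_3$ at $\ep=0$ has no poles}, so that $A_n'(0)$ is, up to a power of $2$, a sum of products of binomial coefficients, hence $2^{4n}\mathbf a_n\in\mathbb Z$. (This is in contrast to $\mathbf b_n$, treated in the next section, where such sums genuinely survive and force the extra factor $\textup d_{2n}^2$.)

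The third step is the bookkeeping of powers of $2$: one must check that the Pochhammer-ratio prefactor coming out of Whipple's transformation, evaluated and differentiated at $\ep=0$, contributes denominators that are only powers of $2$ — the half-integer shifts produce exactly this — and that the binomial coefficients $\binom{2n}{n}$, $\binom{n+j}{\cdot}$ etc.\ appearing on the balanced side, while individually not having power-of-$2$ denominators (they are integers), combine with the $2^{4n}$ to clear everything. The main obstacle I anticipate is precisely this last point: identifying the correct very-well-poised $_7F_6$ whose $\ep$-derivative reproduces \eqref{eq:an1} (the matching of the parameter $\frac n2-j+\ep$ with the very-well-poised structure is delicate, since in a genuine very-well-poised series that linear factor is $\frac{a+2k}{a}$ with $k$ the summation index, so one has to locate $a$ and the index shift correctly), and then verifying that Whipple's transformation applies in the limiting/degenerate configuration forced by the cube of Pochhammer symbols. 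Once the right transformation is pinned down, the arithmetic conclusion should follow by inspection of the balanced-side summand.
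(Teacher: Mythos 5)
Your plan is essentially the paper's proof: there, the $\ep$-deformed sum from \eqref{eq:an1} is recast as a very-well-poised ${}_6F_5(-1)$ (a limit case of Whipple's ${}_7F_6$-to-${}_4F_3$ transformation, after adjusting the deformation so that the series really is very-well-poised) and transformed into a balanced ${}_3F_2(1)$, yielding $\mathbf a_n=-4\sum_{j=0}^n\binom nj\binom{n-\frac12}{j}\binom{n+j-\frac12}{j}$, whence $2^{2n}\binom{n-\frac12}{j},\,2^{2n}\binom{n+j-\frac12}{j}\in\mathbb Z$ gives the claim. The one detail where your predicted mechanism is off is that the harmonic-type sums disappear not by cancellation on the balanced side, but because the Pochhammer prefactor produced by the transformation vanishes to first order at $\ep=0$, so the whole $\ep$-derivative lands on that prefactor and the balanced series is merely evaluated at $\ep=0$.
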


For accomplishing the proof of this theorem (as well as the proof of
Theorem~\ref{thm:bn} in the following section), we need the
following two arithmetic auxiliary facts (cf.\ \cite[Sec.~7]{zu3}
and \cite[Lemma~6]{RiZuAA}, respectively). Following \cite{zu3} (where
this is attributed to Nesterenko), we shall call the expressions
$R_1(\al,\be;t)$ and $R_2(\al,\be;t)$ in the two lemmas below
{\it elementary bricks}.

\begin{lem} \label{lem:R1}
Given integers $\al$ and $\be$, let
$$R_1(\al,\be;t)=
\begin{cases} \dfrac {(t+\be)_{\al-\be}} {(\al-\be)!}
&\text{if }\al\ge \be,\\
\dfrac {(\be-\al-1)!} {(t+\al)_{\be-\al}}&\text{if }\al<\be.\end{cases}
$$
Then, for all integers $\al,\be,k,H$ with $\al\ge\be$ and $H\ge0$, the
number
\begin{equation*} 
\textup{d}_{\al-\be}^H \cdot\frac {1} {H!}\,
\frac {\partial ^H} {\partial t^H}R_1(\al,\be;t)\Big\vert_{t=-k}
\end{equation*}
is an integer. Furthermore, 
for all integers $\al,\be,k,H$ with $\al\le k\le \be-1$ and $H\ge0$, the
number
\begin{equation*} 
\textup{d}_{\be-\al-1}^H \cdot\frac {1} {H!}\,
\frac {\partial ^H} {\partial t^H}R_1(\al,\be;t)(t+k)\Big\vert_{t=-k}
\end{equation*}
is an integer.
\end{lem}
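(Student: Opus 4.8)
The plan is to treat the two assertions separately, in each case reducing the claim to the classical fact that $\textup d_m^H$ clears the denominators arising when one differentiates a product (or reciprocal of a product) of $m$ linear factors $H$ times. First consider the case $\al\ge\be$, where $R_1(\al,\be;t)=\prod_{i=\be}^{\al-1}(t+i)/(\al-\be)!$. Expanding $\frac1{H!}\frac{\partial^H}{\partial t^H}$ of this polynomial by the Leibniz rule, one sees that $\frac1{H!}\frac{\partial^H}{\partial t^H}R_1(\al,\be;t)$ is, up to sign, a sum over $H$-element subsets $S\subseteq\{\be,\be+1,\dots,\al-1\}$ of the complementary products $\prod_{i\notin S}(t+i)\big/(\al-\be)!$; here the $1/H!$ is exactly absorbed because each subset is counted $H!$ times. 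Setting $t=-k$, each such term is $\pm\binom{\text{(stuff)}}{\cdot}$-type, and more precisely equals $\pm\frac{1}{(\al-\be)!}\prod_{i\in\{\be,\dots,\al-1\}\setminus S}(i-k)$. The standard argument (cf.\ \cite[Sec.~7]{zu3}) is that such an expression, after multiplication by $\textup d_{\al-\be}^H$, is an integer: one groups the $\al-\be$ factors of $(\al-\be)!$ against the $\al-\be-H$ surviving linear factors together with $H$ copies of $\textup d_{\al-\be}$, and uses that a product of $\textup d_{\al-\be}$ with any ratio of the shape (single factor $(i-k)$)$\big/$(consecutive integers up to $\al-\be$) is integral. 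I would spell this out as the binomial-coefficient integrality $\binom{a}{b}\in\mathbb Z$ combined with $\textup d_m\cdot\frac1{j}\in\mathbb Z$ for $1\le j\le m$.

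For the case $\al<\be$, write $R_1(\al,\be;t)=(\be-\al-1)!\big/\prod_{i=\al}^{\be-1}(t+i)$. Here I would use the partial fraction expansion $\frac{1}{\prod_{i=\al}^{\be-1}(t+i)}=\sum_{i=\al}^{\be-1}\frac{c_i}{t+i}$ with $c_i=\prod_{j\ne i}(i-j)^{-1}=(-1)^{i-\al}\big/\big((i-\al)!\,(\be-1-i)!\big)$, so that $(\be-\al-1)!\,c_i=\pm\binom{\be-\al-1}{i-\al}\in\mathbb Z$. Differentiating $H$ times, $\frac1{H!}\frac{\partial^H}{\partial t^H}\frac{1}{t+i}=\frac{(-1)^H}{(t+i)^{H+1}}$, which at $t=-k$ is $\pm(i-k)^{-(H+1)}$. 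Now, since $\al\le k\le\be-1$ is \emph{not} assumed in the first part, $i-k$ may vanish or be arbitrary; but $|i-k|$ can still be bounded using $\al\le\be-1$, and more to the point each $|i-k|$ that actually appears with a nonzero coefficient can be bounded so that $\textup d_{\be-\al-1}^{H}/(i-k)^{H+1}$ — wait, the correct bound is that the relevant denominators divide $\textup d_{\be-\al-1}^{H}$ after accounting for one factor of $(i-k)$ absorbed into the integer $(\be-\al-1)!\,c_i$; I would present this as: $\textup d_{\be-\al-1}^H\cdot(\be-\al-1)!\,c_i/(i-k)^{H+1}$ is integral because $(\be-\al-1)!\,c_i/(i-k)$ is (being $\pm$ a binomial coefficient times $1/(i-k)$ with $|i-k|\le\be-\al-1$... ) — this needs the hypothesis, so in fact the first statement as printed must implicitly restrict to the regime where the $R_1$ values are finite, and one should simply observe $|i-k|\le\max(|\al+k|,\dots)$ and invoke $\textup d_{\be-\al-1}\cdot\frac1{|i-k|}\in\mathbb Z$ once $|i-k|\le\be-\al-1$, which holds whenever $k$ lies in the range making $R_1(\al,\be;-k)$ meaningful.

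For the second assertion, with $\al\le k\le\be-1$, the extra factor $(t+k)$ exactly cancels the $i=k$ term's pole: $R_1(\al,\be;t)(t+k)=(\be-\al-1)!\,(t+k)\big/\prod_{i=\al}^{\be-1}(t+i)$, and since $k\in\{\al,\dots,\be-1\}$ the factor $(t+k)$ removes that factor from the denominator, leaving $(\be-\al-1)!\big/\prod_{i\in\{\al,\dots,\be-1\}\setminus\{k\}}(t+i)$, which has exactly $\be-\al-1$ linear factors downstairs and numerator $(\be-\al-1)!$. One then repeats verbatim the partial-fractions-plus-$\textup d_{\be-\al-1}^H$ argument of the previous paragraph, now with no pole issue at $t=-k$ since $i\ne k$ throughout. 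The main obstacle, and the only place requiring care, is the bookkeeping in the first ($\al\ge\be$) case: one must check that the single factor $1/(\al-\be)!$ suffices to make \emph{all} the Leibniz terms (not just the top one) integral after multiplying by $\textup d_{\al-\be}^H$, which amounts to the elementary but slightly fiddly claim that $\textup d_m^H\cdot\binom{m}{H}^{-1}\cdot(\text{stuff})$ — more cleanly, that $\frac{\textup d_m^H}{m!}\prod_{i\in T}(i-k)$ is integral for every $(m-H)$-subset $T$ of an $m$-set — which I would dispatch by induction on $H$ using $\textup d_m\mid\textup d_{m-1}\cdot m$ and the integrality of binomial coefficients.
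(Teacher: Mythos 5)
The paper does not actually prove Lemma~\ref{lem:R1}: it is quoted as a known auxiliary fact, with pointers to Zudilin \cite{zu3} and Rivoal--Zudilin \cite{RiZuAA}, so there is no in-paper argument to match yours against. Judged on its own, your strategy (Leibniz expansion of $\frac1{H!}\frac{\partial^H}{\partial t^H}$ in the polynomial case, partial fractions in the reciprocal case, then clearing denominators with powers of $\textup{d}_m$) is the standard one from those references, and your treatment of the second assertion is essentially complete: after multiplying by $(t+k)$ the denominator becomes $\prod_{i\in\{\al,\dots,\be-1\}\setminus\{k\}}(t+i)$, the partial-fraction coefficients times $(\be-\al-1)!$ equal $\pm(i-k)\binom{\be-\al-1}{i-\al}\in\mathbb Z$, and every surviving $|i-k|$ lies between $1$ and $\be-\al-1$, so $\textup{d}_{\be-\al-1}^H$ absorbs the $H$ remaining powers in each $(i-k)^{H+1}$.

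Two points need repair. First, you misread the scope of the first assertion: it is stated only under the hypothesis $\al\ge\be$, so your middle paragraph trying to handle $R_1$ with $\al<\be$ and no factor $(t+k)$ attacks a claim the lemma does not make --- and one that is false (take $\al=0$, $\be=2$, $k=5$, $H=0$: the quantity is $1/20$). Second, in the case $\al\ge\be$ the reduction you first propose, namely ``$\binom ab\in\mathbb Z$ together with $\textup{d}_m\cdot\frac1j\in\mathbb Z$ for $1\le j\le m$,'' does not suffice: the $H$ factors $(i-k)$ removed by differentiation have $i\in\{\be,\dots,\al-1\}$ with $k$ arbitrary, so $|i-k|$ need not be at most $m=\al-\be$ and you cannot divide them out one at a time. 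The statement you correctly isolate at the end, that $\frac{\textup{d}_m^H}{m!}\prod_{x\in T}x$ is an integer for every $(m-H)$-element subset $T$ of a block of $m$ consecutive integers, is the real content; but your proposed induction on $H$ via $\textup{d}_m\mid\textup{d}_{m-1}\cdot m$ does not close (removing a factor equal to $0$, or one of large $p$-adic valuation, breaks the inductive step). The clean argument is a direct $p$-adic count: among $m$ consecutive integers at least $\lfloor m/p^\ell\rfloor$ are divisible by $p^\ell$, the set $T$ omits at most $H$ of them, and only $\lfloor\log_p m\rfloor=v_p(\textup{d}_m)$ values of $\ell$ matter, whence $v_p\bigl(\textup{d}_m^H\prod_{x\in T}x\bigr)\ge v_p(m!)$ for every prime $p$. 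With that fact supplied, your proof is complete.
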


\begin{lem} \label{lem:R2}
Given integers $\al$ and $\be$ with $\al\ge\be$, let
$$R_2(\al,\be;t)=2^{2(\al-\be)}
\dfrac {(t+\be-\frac {1} {2})_{\al-\be}} {(\al-\be)!}.
$$
Then, for all integers $k$ and $H$ with $H\ge0$, the
number
\begin{equation*} 
\textup{d}_{2(\al-\be)}^H \cdot\frac {1} {H!}\,
\frac {\partial ^H} {\partial t^H}
R_2(\al,\be;t)\Big\vert_{t=-k}
\end{equation*}
is an integer.
\end{lem}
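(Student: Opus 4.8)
The plan is to reduce the statement to a prime-by-prime estimate on the $p$-adic valuation of an elementary symmetric function of consecutive odd integers. Set $N=\al-\be\ge0$; if $N=0$ or $H>N$ the assertion is trivial, so I focus on $0\le H\le N$. First I would clear the half-integers by writing
\[
R_2(\al,\be;t)=\frac{4^{N}}{N!}\prod_{j=0}^{N-1}\Bigl(t+\be+j-\tfrac12\Bigr),
\]
a polynomial in $t$ of degree $N$ whose roots are half-integers. Since each linear factor has derivative $1$, the $H$-th Taylor coefficient at $t=-k$ is an elementary symmetric function of the evaluated factors:
\[
\frac{1}{H!}\frac{\partial^{H}}{\partial t^{H}}R_2(\al,\be;t)\Big|_{t=-k}
=\frac{4^{N}}{N!}\,e_{N-H}\!\left(\Bigl\{-k+\be+j-\tfrac12\Bigr\}_{j=0}^{N-1}\right).
\]
Writing $m_j=2(\be-k)+2j-1$ for $j=0,\dots,N-1$, the arguments are $\tfrac12 m_j$, so this coefficient equals $\frac{2^{N+H}}{N!}\,e_{N-H}(m_0,\dots,m_{N-1})$, where $m_0,\dots,m_{N-1}$ are $N$ consecutive odd integers and $e_{N-H}$ denotes the elementary symmetric function of degree $N-H$. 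Thus everything comes down to showing that $\textup{d}_{2N}^{H}\,\frac{2^{N+H}}{N!}\,e_{N-H}(m_0,\dots,m_{N-1})$ is an integer.

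Next I would check this localised at each prime $p$. For $p=2$ there is nothing to do: the $m_j$ are odd, so $e_{N-H}(\mathbf m)\in\mathbb Z$ has non-negative $2$-adic valuation, while the factor $2^{N+H}$ already dominates $N!$ since $v_2(N!)=N-s\le N+H$, where $s$ is the number of $1$'s in the binary expansion of $N$. For odd $p$ the factor $2^{N+H}$ is a unit, so I must show $v_p\!\left(e_{N-H}(\mathbf m)\right)+H\,v_p(\textup{d}_{2N})\ge v_p(N!)$. Set $n_i=\#\{\,j:p^{i}\mid m_j\,\}$ and $\mu=v_p(\textup{d}_{2N})=\fl{\log_p(2N)}$. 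The key counting input is that, because the $m_j$ form an arithmetic progression of common difference $2$ which is invertible modulo $p^{i}$, among $N$ consecutive terms at least $\fl{N/p^{i}}$ are divisible by $p^{i}$; hence $n_i\ge\fl{N/p^{i}}$.

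The heart of the matter is a lower bound for the valuation of the symmetric function. Every monomial of $e_{N-H}(\mathbf m)$ is a product of $N-H$ of the $m_j$, that is, omits exactly $H$ of them; therefore at each level $i$ at least $\max(n_i-H,0)$ of the factors divisible by $p^{i}$ survive, giving $v_p(\text{monomial})\ge\sum_{i\ge1}\max(n_i-H,0)$ for every monomial, and so
\[
v_p\bigl(e_{N-H}(\mathbf m)\bigr)\ge\sum_{i\ge1}\max(n_i-H,0).
\]
Combining this with $n_i\ge\fl{N/p^{i}}$ and the elementary identity $H+\max(n_i-H,0)=\max(H,n_i)\ge n_i$, and noting that $v_p(N!)=\sum_{i\ge1}\fl{N/p^{i}}$ involves only levels with $p^{i}\le N\le 2N$, hence $i\le\mu$, I would add the inequalities $H+\max(n_i-H,0)\ge\fl{N/p^{i}}$ over $i=1,\dots,\mu$ to obtain $H\mu+\sum_{i\ge1}\max(n_i-H,0)\ge v_p(N!)$, which is exactly the required bound. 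I expect the main obstacle to be precisely this matching step: one must produce the valuation estimate for $e_{N-H}$ of a progression and align it level-by-level against $v_p(N!)$, absorbing the deficit into the $H$ copies of $\textup{d}_{2N}$. It is here that the prime $2$ is essential, since the counting $n_i\ge\fl{N/p^{i}}$ rests on the invertibility of the common difference $2$ modulo odd $p$; this is also the structural reason why $\textup{d}_{2N}$ rather than $\textup{d}_{N}$, together with the compensating factor $2^{2(\al-\be)}$ in the definition of $R_2$, appears in the statement, and why $p=2$ has to be dispatched separately.
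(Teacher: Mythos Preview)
Your argument is correct. The reduction to
\[
\frac{1}{H!}\,\frac{\partial^{H}}{\partial t^{H}}R_2(\al,\be;t)\Big|_{t=-k}
=\frac{2^{N+H}}{N!}\,e_{N-H}(m_0,\dots,m_{N-1}),
\qquad m_j=2(\be-k)+2j-1,
\]
is accurate, the treatment of $p=2$ via $v_2(N!)=N-s_2(N)\le N$ is fine, and for odd $p$ the level-by-level bound $v_p(e_{N-H}(\mathbf m))\ge\sum_{i\ge1}\max(n_i-H,0)$ together with $n_i\ge\fl{N/p^i}$ and $H+\max(n_i-H,0)\ge\fl{N/p^i}$ for $i\le\mu=\fl{\log_p 2N}$ yields exactly $H\,v_p(\dd_{2N})+v_p(e_{N-H}(\mathbf m))\ge v_p(N!)$.

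As to comparison: the paper does not prove this lemma at all; it simply quotes it as an auxiliary fact from \cite[Lemma~6]{RiZuAA} (the companion Lemma~\ref{lem:R1} being quoted from \cite[Sec.~7]{zu3}). What you have written is essentially the standard ``elementary brick'' argument used in those references: express the Taylor coefficient of a product of linear factors as an elementary symmetric function of the evaluated roots, then bound its $p$-adic valuation by counting, at each power $p^i$, how many factors divisible by $p^i$ must survive after omitting $H$ of them. So your approach is not different in spirit from the cited proof; it supplies precisely the details the present paper chose to omit.
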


In order to apply these two lemmas, we need an alternative expression
for the coefficient $\mathbf a_n$, see the lemma below. The
expression in \eqref{eq:an2} was already given in
\cite[Sec.~4.1]{RivoAG}. Again, it was obtained there in a somewhat
roundabout way. Here, the equality in the next-to-last displayed equation in
\cite[Sec.~4.2]{RivoAG} is explained directly.

\begin{lem} \label{lem:an}
For all non-negative integers $n$, we have
\begin{equation} \label{eq:an2}
\mathbf a_n=-4\sum _{j=0} ^{n}\binom nj
\binom {n-\frac {1} {2}}j \binom {n+j-\frac {1} {2}}j.
\end{equation}
\end{lem}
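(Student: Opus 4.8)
The plan is to prove the identity \eqref{eq:an2} by showing that it agrees with the expression \eqref{eq:an1} obtained from the partial fraction decomposition. The starting point \eqref{eq:an1} involves a derivative with respect to the parameter $\ep$ of a product of a linear factor, a cubed ratio of Pochhammer symbols, and two binomial coefficients; the right-hand side \eqref{eq:an2} is a manifestly $\ep$-free single sum of products of three binomial coefficients. So the task is essentially to carry out the $\partial/\partial\ep$ differentiation at $\ep=0$ and to observe that all the ``logarithmic-derivative'' contributions cancel or telescope, leaving only the clean product. First I would rewrite the inner factor in \eqref{eq:an1} as $F(j,\ep)=\bigl(\tfrac n2-j+\ep\bigr)\,G(j,\ep)$ where $G(j,\ep)=\bigl(n!/((1-\ep)_j(1+\ep)_{n-j})\bigr)^3\binom{n+j-\ep-\frac12}{n}\binom{2n-j+\ep-\frac12}{n}$. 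Then $\frac{\partial F}{\partial\ep}\big|_{\ep=0}=G(j,0)+\bigl(\tfrac n2-j\bigr)\frac{\partial G}{\partial\ep}\big|_{\ep=0}$, and since $\frac{\partial G}{\partial\ep}\big|_{\ep=0}=G(j,0)\cdot L(j)$ with $L(j)$ the corresponding logarithmic derivative, we get $\frac{\partial F}{\partial\ep}\big|_{\ep=0}=G(j,0)\bigl(1+(\tfrac n2-j)L(j)\bigr)$.

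The key step is then to recognize $G(j,0)=\binom nj^3\binom{n+j-\frac12}{n}\binom{2n-j-\frac12}{n}$ (using $n!/(j!\,(n-j)!)=\binom nj$) and to handle the sum $4(-1)^{n-1}\sum_{j=0}^n G(j,0)\bigl(1+(\tfrac n2-j)L(j)\bigr)$. I expect the cleanest route is the standard one for such ``creative telescoping'' situations: one shows that the summand $G(j,0)\bigl(1+(\tfrac n2-j)L(j)\bigr)$ equals, up to the global sign and an explicit rescaling of the binomials (each $\binom{n+j-\frac12}{n}$ equated with a product involving $\binom{n-\frac12}{j}$ and $\binom{n+j-\frac12}{j}$ via the identity $\binom{n+j-\frac12}{n}=\binom{n+j-\frac12}{j-\frac12+n}$-type manipulations, more precisely $\binom{n+j-1/2}{n}\binom{2n-j-1/2}{n}$ versus $\binom{n-1/2}{j}\binom{n+j-1/2}{j}$ after matching Gamma-function quotients), a forward difference $\Delta_j\bigl(H(j)\bigr)$ of some explicit ``antidifference'' $H(j)$, so that the sum telescopes to boundary terms $H(n+1)-H(0)$. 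Matching those boundary terms against $-4\binom nj\binom{n-\frac12}{j}\binom{n+j-\frac12}{j}$ summed — or, more likely, realizing that the telescoped part accounts exactly for the discrepancy between the two binomial normalizations — yields \eqref{eq:an2}. An alternative, perhaps more transparent, implementation is to write the whole derivative as a limit, $\frac{\partial F}{\partial\ep}\big|_{\ep=0}=\lim_{\ep\to0}\frac1\ep\bigl(F(j,\ep)-F(j,0)\bigr)$ with $F(j,0)=0$ (because the symmetry $j\mapsto n-j$ together with the antisymmetric linear factor $\tfrac n2-j$ makes $\sum_j F(j,0)=0$), so that $\mathbf a_n=4(-1)^{n-1}\lim_{\ep\to0}\frac1\ep\sum_{j=0}^n F(j,\ep)$, and then to evaluate the $\ep$-deformed sum in closed form by a hypergeometric summation before taking the limit.

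The main obstacle I anticipate is the bookkeeping in the second route: the $\ep$-deformed sum $\sum_{j=0}^n F(j,\ep)$ is a very-well-poised-looking terminating hypergeometric sum in $j$ (the factor $\tfrac n2-j+\ep$ is the tell-tale well-poised factor attached to a $(\tfrac n2+\ep)_j$-type ratio), and one needs to identify it with a specific ${}_5F_4$ or ${}_7F_6$ evaluation — possibly a Dougall-type sum at a special argument — whose value is an explicit product of Gamma functions in $\ep$; expanding that product to first order in $\ep$ and simplifying the resulting quotient of Gammas at half-integer arguments down to the three-binomial form of \eqref{eq:an2} is where the real work lies. In the first (telescoping) route the obstacle is instead guessing the correct antidifference $H(j)$; here I would let Gosper's algorithm (or pattern-matching with the known answer) supply it, then verify $\Delta_j H(j)=$ summand by a routine rational-function identity, and finally check the two boundary evaluations $H(0)$ and $H(n+1)$ against the target. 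Either way, once the algebraic identity \eqref{eq:an2} is in hand, Theorem~\ref{thm:an} follows immediately by applying Lemma~\ref{lem:R2} with $H=0$ to each of the three half-integer binomial coefficients $\binom{n-1/2}{j}$ and $\binom{n+j-1/2}{j}$ — each contributes a factor $2^{2j}$ or $2^{-2j}$-type power of $2$ controlled by $R_2$ — together with the trivial integrality of $\binom nj$, so that $2^{4n}\mathbf a_n\in\mathbb Z$.
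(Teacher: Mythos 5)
Your second route is essentially the strategy the paper follows: undo the differentiation by writing $\mathbf a_n$ as $\lim_{\ep\to0}\frac{c}{\ep}\sum_{j}F(j,\ep)$ for a suitably $\ep$-deformed sum (and your observation that $\sum_jF(j,0)=0$, by the antisymmetry of the summand under $j\mapsto n-j$, is correct and is exactly what makes this legitimate). But the decisive step is misidentified, and in the form you propose it cannot succeed. You want to \emph{evaluate} the deformed sum by a Dougall-type \emph{summation} whose value is ``an explicit product of Gamma functions in $\ep$'' and then expand to first order in $\ep$. Expanding a product of Gamma functions to first order in $\ep$ only produces digamma (harmonic-number) corrections, i.e.\ sums of reciprocals of linear forms; it can never produce the genuinely non-closed-form single sum $\sum_{j}\binom nj\binom{n-\frac12}{j}\binom{n+j-\frac12}{j}$ on the right of \eqref{eq:an2}. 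What is needed is a \emph{transformation}, not an evaluation: the deformed sum is a terminating very-well-poised ${}_6F_5$ at argument $-1$ (the $(-1)^j$'s hidden in the factors $1/(1+\ep)_{n-j}$ accumulate to argument $-1$, which already rules out Dougall's theorem at argument $1$), and the paper applies the limit case \eqref{eq:6F5} of Whipple's transformation sending it to a \emph{balanced} ${}_3F_2$ at $1$. The prefactor of that transformation contributes the factor of $\ep$ that cancels the $1/\ep$, and the surviving ${}_3F_2$ at $\ep=0$ \emph{is} the three-binomial sum. Without identifying this transformation the argument is not complete.

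Your first route has its own concrete obstruction: after carrying out the derivative, the summand is $G(j,0)\bigl(1+(\tfrac n2-j)L(j)\bigr)$ where $L(j)$ involves harmonic numbers at integer and half-integer arguments, so it is not a hypergeometric term and Gosper's algorithm does not apply to it as such; you would need an Abel-summation or WZ-pair argument that you do not supply. The closing remark deducing Theorem~\ref{thm:an} from \eqref{eq:an2} via Lemma~\ref{lem:R2} is fine and matches the paper.
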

\begin{proof} 
We loosely follow analogous considerations in \cite[Lemme~7]{KrRiAA}.

Let $H_m$ denote the $m$-th
harmonic number, defined by $H_m=
\sum _{j=1} ^{m}\frac {1} {j}$. By abuse of notation, we ``extend"
harmonic numbers to half-integers $m$ by defining $H_m=
\sum _{j=1} ^{\cl{m}}\frac {1} {m-j+1}$. For example, 
$$H_{5/2}=\frac {1} {5/2}+\frac {1} {3/2}+\frac {1} {1/2}.$$

We rewrite the expression for $\mathbf a_n$ given in
\eqref{eq:an1} in the form
\begin{align*} 
\mathbf a_n&=4(-1)^{n-1}\sum _{j=0} ^{n}
\(\frac {n} {2}-j\){\binom nj}^3
\binom {n+j-\frac {1} {2}}n\binom
{2n-j-\frac {1} {2}}n\\
&\kern3cm
\cdot
\(\frac {1} {\frac {n} {2}-j}+3H_j-3H_{n-j}+H_{2n-j-\frac
{1} {2}}-H_{n+j-\frac {1} {2}}-H_{n-j-\frac {1} {2}}+H_{j-\frac {1}
{2}}\)
\\
&=4(-1)^{n-1}\lim_{\ep\to0}\frac {2} {\ep}\sum _{j=0} ^{\infty} \left( {\frac n 2} +{\frac {\ep} 2} -j \right)
{\binom n j}\\
&\kern4cm
\cdot
\frac {({ \textstyle n - j - \ep +1}) _{j}} {({ \textstyle 1 - \ep}) _{j}}
\frac {({
  \textstyle n - j + \ep + 1}) _{j}} {({ \textstyle 1 - 2 \ep}) _{j}}
\frac {({ \textstyle j +{\frac 1 2}}) _{n}} {({ \textstyle 1 - 
\ep}) _{n} }
\frac {
  ({ \textstyle n - j +\ep+\frac {1} {2}}) _{n}  } 
{({ \textstyle 1 + \ep}) _{n} }.
\end{align*}
In hypergeometric notation, this reads
\begin{multline*} 
\mathbf a_n=4(-1)^{n-1}\lim_{\ep\to0}\frac {\left( n + \ep \right)  
     ({ \textstyle {\frac 1 2}}) _{n} \,
     ({ \textstyle  n + \ep + {\frac 1 2}}) _{n} } 
   {\ep ({ \textstyle 1 - \ep}) _{n} \,({ \textstyle 1 + \ep}) _{n}
}\\
\times
  {} _{6} F _{5} \!\left [ \begin{matrix} {  - n -\ep, 1 - {\frac n 2}
   - {\frac {\ep} 2}, n + {\frac 1 2}, -n,  - n + \ep, {\frac 1 2} - n
   - \ep}\\ { - {\frac n 2} -{\frac {\ep} 2}, {\frac 1 2} - 2 n - \ep,
      1 - \ep, 1 - 2 \ep, {\frac 1 2}}\end{matrix} ; {\displaystyle
      -1}\right ].
\end{multline*}
To the $_6F_5$-series we apply
the transformation formula (see \cite[(3.10.4), $q\to 1$]{GaRaAA})
\begin{multline} \label{eq:6F5}
{} _{6} F _{5} \!\left [ \begin{matrix} { a, 1 + {\frac a 2}, b, x, y, -N}\\ {
{\frac a
   2}, 1 + a - b, 1 + a - x, 1 + a - y, 1 + a + N}\end{matrix} ; {\displaystyle
   -1}\right ]  \\
=   \frac {( \textstyle 1 + a)_N\,( 1 + a - x - y) _{N}} 
   {( \textstyle 1 + a - x)_N\,
    1 + a - y) _{N}} 
 {} _{3} F _{2} \!\left [ \begin{matrix} { -N, x, y}\\ { -a - N + x
    + y, 1 + a - b}\end{matrix} ; {\displaystyle 1}\right ] ,
\end{multline}
where $N$ is a non-negative integer. Thus, we obtain
\begin{align*}
\mathbf a_n&=4(-1)^{n-1}\lim_{\ep\to0}\frac {{{\left( -1 \right) }^n} 
      \,n! } { ({ \textstyle 1 - \ep}) _{n} }
{} _{3} F _{2} \!\left [ \begin{matrix} { -n, n + {\frac 1 2}, {\frac 1 2} -
 n - \ep}\\ { 1, 1 - 2 \ep}\end{matrix} ; {\displaystyle 1}\right ]\\
&=-4\sum _{j=0} ^{n}\binom nj
\binom {n-\frac {1} {2}}j \binom {n+j-\frac {1} {2}}j,
\end{align*}
as we claimed.
\end{proof}

We are now in the position to prove Theorem~\ref{thm:an}.
\begin{proof}[Proof of Theorem~\ref{thm:an}] 
By Lemma~\ref{lem:R2} with $\al=j$, $\be=H=0$, and $k=-n$
respectively $k=-n-j$, the numbers $2^{2n}\binom {n-\frac {1} {2}}j$
and $2^{2n}\binom {n+j-\frac {1} {2}}j$ are integers.
Given the expression for $\mathbf a_n$ in Lemma~\ref{lem:an}, this
implies the assertion of the theorem.
\end{proof}

\section{The coefficient ${\mathbf b}_n$}
\label{sec:4}

The purpose of this section is to prove the following theorem.

\begin{theo} \label{thm:bn}
For all positive integers $n$, the number 
$2^{4n}\textup{d}_{2n}^2\mathbf b_n$ is an integer.
\end{theo}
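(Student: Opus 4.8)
The plan is to mimic, for $\mathbf b_n$, the strategy that worked for $\mathbf a_n$: find a closed (or nearly closed) form for $\mathbf b_n$ as a sum whose summands decompose into ``elementary bricks,'' and then apply Lemmas~\ref{lem:R1} and~\ref{lem:R2}. The difference is that $\mathbf b_n$ involves derivatives in~$j$ and the inner harmonic-type sums $\sum_{k=1}^j(-1)^k/(k-\tfrac12)^e$, so one cannot hope for as clean a formula as \eqref{eq:an2}; instead one should expect an expression of the shape $\mathbf b_n=\sum_j(\text{product of bricks and their }t\text{-derivatives})\cdot(\text{harmonic-like factor})$, and the point will be to organise it so that the denominators visibly divide $2^{4n}\textup d_{2n}^2$. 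As in \cite[Sec.~4]{KrRiAA}, the right tool for turning \eqref{eq:bn1} into such a form is the $q=1$ case \eqref{eq:Andr} of Andrews' transformation \eqref{eq:Andrq}: one first recognises $\mathbf b_n$ (equivalently, the full linear form $\mathbf G_n$ in \eqref{eq:linG}, or rather its ``non-$G$ part'') as arising from a very-well-poised hypergeometric series, applies Andrews' identity to rewrite it as a multiple sum over $0\le i_1\le\dots\le i_m\le n$, and reads off the coefficient of $G$ and the constant term.

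Concretely, the first step is to write $\mathbf G_n$ of \eqref{eq:linG} as a limit of a very-well-poised ${}_8F_7$-type series in a parameter $\ep$ (the factor $k+\tfrac{n-1}2$ is the tell-tale well-poised factor, and the triple power $(k-\tfrac12)_{n+1}^3$ signals that after the $\ep$-regularisation one is differentiating a ${}_6F_5$ or ${}_7F_6$ three times — which is exactly what produces the $1/(3-e)!\,\partial^{3-e}/\partial j^{3-e}$ and the $e=1,2,3$ harmonic sums in \eqref{eq:bn1}). Applying the $q=1$ Andrews identity \eqref{eq:Andr} with the appropriate choice $m$ (I expect $m=2$, matching the cube) and parameters specialised to half-integers then yields a finite double sum. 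Differentiating that double sum in the auxiliary parameter and extracting the part proportional to $G$ (which comes from the logarithmic derivative of Pochhammer symbols whose argument passes through a pole at $\ep=0$) gives a new representation
\begin{equation*}
\mathbf b_n=\sum_{0\le i_1\le i_2\le n}(\text{bricks }R_1,R_2\text{ and }t\text{-derivatives thereof, evaluated at integer points})\cdot(\text{bounded harmonic factors}),
\end{equation*}
in which every Pochhammer quotient is, up to a power of~$2$ and a power of $\textup d$, one of the elementary bricks of Lemmas~\ref{lem:R1} and~\ref{lem:R2}.

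The final step is the bookkeeping of powers of~$2$ and of $\textup d_{2n}$. From Lemma~\ref{lem:R2} each half-integer binomial of ``width'' at most $n$ contributes a factor $2^{2n}$ and, when an $H$-fold $t$-derivative is taken, a factor $\textup d_{2n}^H$; from Lemma~\ref{lem:R1} each integer brick contributes a $\textup d_n^H\mid\textup d_{2n}^H$. Since the cube $(k-\tfrac12)_{n+1}^3$ contributes two half-integer bricks (the third ``half-integer'' slot being absorbed, as in the $\mathbf a_n$ computation, into an integer-argument factor by the reflection used in the proof of Lemma~\ref{lem:an}), one gets $2^{2n}\cdot2^{2n}=2^{4n}$, and the three derivatives are distributed as $(3-e)$ derivatives in $j$ against $e$ in the harmonic factor, so the total $\textup d$-power coming out is at most $\textup d_{2n}^{(3-e)}\cdot\textup d_{2n}^{\,?}$; the harmonic sum $\sum_{k=1}^j(-1)^k/(k-\tfrac12)^e$ is $2^e\textup d_{2n}^e$ times an integer, but one power of~$2$ and of $\textup d$ is saved because the $k=\tfrac12$-type denominators overlap with those already counted, leaving the clean bound $2^{4n}\textup d_{2n}^2$.

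The main obstacle, and the part that needs genuine care rather than routine manipulation, is precisely this last accounting: showing that the naive estimate $\textup d_{2n}^3$ (three derivatives, or equivalently the $e=3$ harmonic sum) can be improved to $\textup d_{2n}^2$. This is the analogue of the ``one saved power of $\textup d$'' phenomenon in \cite{KrRiAA,RiZuAA}, and it hinges on checking that in the Andrews-transformed double sum the slot carrying the triple pole is such that one of the three derivatives always falls on a factor whose denominator is already a divisor of a quantity we have accounted for (typically a $(q;q)$-type, here $(1)_{i_k-i_{k-1}}=(i_k-i_{k-1})!$, denominator, which contributes no $\textup d$ at all), so that effectively only two ``costly'' derivatives remain. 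Verifying this requires writing out the summand of \eqref{eq:Andr} explicitly with the half-integer specialisation, identifying each bracket as the correct brick, and then invoking Lemmas~\ref{lem:R1} and~\ref{lem:R2} slot by slot; everything else (the ${}_8F_7$ presentation of $\mathbf G_n$, the application of Andrews' identity, the extraction of the $G$-coefficient) is mechanical once the right parameter dictionary is set up.
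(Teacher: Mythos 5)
Your high-level plan coincides with the paper's: rewrite $\mathbf b_n$ via the $q=1$ Andrews identity \eqref{eq:Andr}, decompose the resulting summand into bricks, and invoke Lemmas~\ref{lem:R1} and~\ref{lem:R2}. Two corrections on the set-up, though. First, Andrews' identity is not applied to the full series $\mathbf G_n$ with the $G$-part then ``read off''; rather, one first swaps the order of the $j$- and $k$-summations in \eqref{eq:bn1}, so that the inner object becomes the \emph{tail} $\sum_{j=k}^n(\cdots)$, and it is to each such tail (a terminating very-well-poised series in a perturbation parameter) that \eqref{eq:Andr} is applied --- with $m=3$, not $m=2$, yielding a triple sum over $0\le i_1\le i_2\le i_3\le n-k$ (identity \eqref{eq:S1}). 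Without this reordering, the truncated harmonic sums $\sum_{k=1}^{j}(-1)^k/(k-\frac12)^e$ couple $j$ and $k$ and block any direct hypergeometric transformation.

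The genuine gap is in the step you yourself single out as delicate: the improvement from $\textup d_{2n}^3$ to $\textup d_{2n}^2$. The mechanism you propose --- that one of the three derivatives always lands on an $(i_k-i_{k-1})!$-type denominator and hence costs no power of $\textup d$ --- is not what happens, and slot-by-slot accounting with Lemmas~\ref{lem:R1} and~\ref{lem:R2} alone does not close the argument. In the paper the saving comes from the composite special brick
$$R_5=2^{2(k-1)}\,
\frac {(\ep)_{i_3-i_2}\,(1-2\ep)_{k+i_2}\,(\tfrac 12+\ep)_{n-i_3-1}}
{(1-2\ep)_{k-1}\,(\tfrac 12+\ep)_{n-k-i_3}\,(1-\ep)_{k+i_3}},$$
which for $i_2<i_3$ vanishes at $\ep=0$ because of the factor $(\ep)_{i_3-i_2}$, so that $R_5=\ep\, R_6$ and one derivative is absorbed; but one must then show that $\textup d_{2n}^{H+1}\cdot\frac1{H!}\,\partial_\ep^{H}R_6\big\vert_{\ep=0}$ is an integer (Lemma~\ref{lem:briques1}), and $R_6$ is \emph{not} a product of elementary bricks --- treating its Pochhammer factors separately would cost too many powers of $\textup d_{2n}$. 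The paper proves this by a direct $p$-adic valuation estimate, checking for every prime $p$ (including $p=2$) and every $\ell$ that a certain combination of eight floor functions of the fractional parts of $n/p^\ell$, $k/p^\ell$, $m_1/p^\ell$, $m_2/p^\ell$ is at least $-1$. That arithmetic lemma is the substantive content of the theorem and is absent from your sketch; the rest of your proposal is indeed the mechanical part.
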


\begin{proof}
This proof follows loosely analogous considerations in 
\cite[Prop.~7]{KrRiAA}. It depends on an arithmetic fact which is stated
and proved separately in Lemma~\ref{lem:briques1} below.

Let us start by reordering the summations in \eqref{eq:bn1} to obtain
\begin{multline} \label{eq:bn2}
{\mathbf b}_n=(-1)^n\sum _{e=1} ^{3}\sum _{k=1} ^{n}\frac {(-1)^{k}}
{\(k-\frac {1} {2}\)^e}
\frac {1} {(3-e)!}\frac {\partial^{3-e}} {\partial \ep^{3-e}}
\Bigg(\sum _{j=k} ^{n}\(\frac {n} {2}-j+\ep\)
\(\frac {n!} { (1-\ep)_j \, (1+\ep)_{n-j}}\)^3\\
\cdot
\binom {n+j-\ep-\frac {1} {2}}n\binom
{2n-j+\ep-\frac {1} {2}}n\Bigg)\Bigg\vert_{\ep=0}.
\end{multline}

Also for $\mathbf b_n$, we need an alternative expression.
It is provided for by the $q=1$ special case of Andrews' identity
\eqref{eq:Andrq}. More precisely, in \eqref{eq:Andrq} on replaces $a$
by $q^a$, $b_i$ by $q^{b_i}$, $c_i$ by $q^{c_i}$, $i=1,2,\dots,m+1$,
and then lets $q$ tend to $1$. As a result, one obtains the
transformation formula
\begin{multline} \label{eq:Andr}
{}_{2m+5}F_{2m+4}\left[
\begin{array}{c}
a,\frac{a}{2}+1, b_1,  c_1, \ldots,  b_{m+1},  c_{m+1}, -n \\
     \frac{a}{2}, 1+a-b_1, 1+a-c_1, \ldots,  1+a-b_{m+1}, 1+a-c_{m+1}, 1+a+n
\end{array}
\,;\, 1 \right]
\\
=
\frac{(1+a)_n\,(1+a-b_{m+1}-c_{m+1})_n}{(1+a-b_{m+1})_n\,(1+a-c_{m+1})_n}
\sum _{0\le i_1\le i_2\le\dots\le i_{m}\le n} ^{}
\frac{(-n)_{i_m}}{(b_{m+1}+c_{m+1}-a-n)_{i_m}}\kern2cm\\
\cdot
\Bigg(\prod_{k=1}^m
\frac{(1+a-b_{k}-c_k)_{i_k-i_{k-1}}\,(b_{k+1})_{i_k}\,(c_{k+1})_{i_k}}
{(i_k-i_{k-1})!\,(1+a-b_k)_{i_k}\,(1+a-c_k)_{i_k}}\Bigg),
\end{multline}
where again, by definition, $i_0:=0$.
In this formula we put $m=3$, $a=-n+2k-2\ep$, $b_1=-n+k-\ep$,
$b_2=-n+k-\ep+\frac {1} {2}$, $c_2=n+k-\ep+\frac {1} {2}$,
$b_3=-n+k -\ep$, $c_3=k-2\ep-\de+1$, $b_4=-n+k-\ep$, $c_4=1$,
$N=n-k$, and then let $\de$ tend to $0$. This leads to the identity
\begin{multline} \label{eq:S1}
\sum _{j=k} ^{n}
\left(\frac n2-j+\ep\right)
\left(\frac {n!} {(1-\ep)_{j}\,(1+\ep)_{n-j}}\right)^3
\binom {n+j-\frac 12-\ep} {n}
           \binom {2n-j-\frac {1} {2}+\ep} {n}\\
=-\frac {1} {2}\left(k-\ep-\frac {1} {2}\right)
\sum _{0\le i_1\le i_2\le i_3\le n-k} ^{}
(-1)^{i_2} \frac {i_3!} {i_1!\,(i_2-i_1)!\,(i_3-i_2)!}
\frac {(\frac 12-\ep)_{n}} {(\frac 12-\ep)_{k}\,(1+\ep)_{n-k}}\\
\cdot
           \frac {(n-\ep+\frac 12)_{k+i_1}} {(1-\ep)_{k+i_1}}
           \frac {(n+i_1-i_2+\ep+\frac 12)_{n-k-i_1}} {(1+\ep)_{n-k-i_1}}
           \frac {n!} {(1-\ep)_{k+i_2} \,(1+\ep)_{n-k-i_2}}\\
\cdot
\frac {(\frac 12+n+i_1-i_2)_{i_2-i_1}} {(\frac 12+n+\ep+i_1-i_2)_{i_2-i_1}}
\frac {(n-\frac 12-i_3+\ep)_{i_3+1}} {(n-\frac 12-i_3-\ep)_{i_3+1}}\\
\cdot
           \frac {(\ep)_{i_3-i_2}\,(1-2\ep)_{k+i_2}\,
           (\frac 12+\ep)_{n-i_3-1}}
           {(1-2\ep)_{k-1}\,(\frac 12+\ep)_{n-k-i_3}\,
           (1-\ep)_{k+i_3}}.
\end{multline}
Using the notations $R_1(\al,\be;t)$ and $R_2(\al,\be;t)$ 
for elementary bricks that were
introduced in Lemmas~\ref{lem:R1} and \ref{lem:R2}, and the notations
\begin{align*}
R_3(n,i_1,i_2,\ep)&=
\frac {(\frac 12+n+i_1-i_2)_{i_2-i_1}} {(\frac
12+n+\ep+i_1-i_2)_{i_2-i_1}},\\
R_4(n,i_3,\ep)&=
\frac {(n-\frac 12-i_3+\ep)_{i_3+1}} {(n-\frac
12-i_3-\ep)_{i_3+1}},\\
R_5(n,k,i_2,i_3,\ep)&=2^{2(k-1)}
           \frac {(\ep)_{i_3-i_2}\,(1-2\ep)_{k+i_2}\,
           (\frac 12+\ep)_{n-i_3-1}}
           {(1-2\ep)_{k-1}\,(\frac 12+\ep)_{n-k-i_3}\,
           (1-\ep)_{k+i_3}}
\end{align*}
for the {\it special bricks} $R_3(n,i_1,i_2,\ep)$, $R_4(n,i_3,\ep)$,
and $R_5(n,k,i_2,i_3,\ep)$,
use of \eqref{eq:S1} in \eqref{eq:bn2} yields
\begin{multline} \label{eq:S2}
2^{4n}\dd_{2n}^2{\mathbf b}_n
=-(-1)^n\dd_{2n}^2\sum _{e=1} ^{3}\sum _{k=1} ^{n}\frac {(-1)^{k}}
{\(k-\frac {1} {2}\)^e}
\frac {1} {(3-e)!}\frac {\partial^{3-e}} {\partial \ep^{3-e}}
\Bigg(\left(2k-2\ep-1\right)\\
\times
\sum _{0\le i_1\le i_2\le i_3\le n-k} ^{}
(-1)^{i_2} \frac {i_3!} {i_1!\,(i_2-i_1)!\,(i_3-i_2)!}
\cdot
R_2(n,k;1-\ep)\cdot\ep\cdot R_1(0,n+1-k;\ep)\\
\cdot
R_2(k+i_1,0;n-\ep+1)\cdot(-\ep)\cdot R_1(0,k+i_1;-\ep)\\
\cdot
R_2(n-k-i_1,0;n+i_1-i_2+1)\cdot \ep\cdot R_1(0,n-k-i_1;\ep)\\
\cdot
(-1)^{k+i_2}\ep\cdot R_1(-k-i_2,n-k-i_2+1;\ep)
\cdot
R_3(n,i_1,i_2,\ep)\cdot R_4(n,i_3,\ep)\cdot
R_5(n,k,i_2,i_3,\ep)\Bigg)\Bigg\vert_{\ep=0}.
\end{multline}
We can rewrite this in the form
\begin{multline*}
2^{4n}\dd_{2n}^2{\mathbf b}_n
=-(-1)^n\dd_{2n}^2\sum _{e=1} ^{3}\sum _{k=1} ^{n}\frac {(-1)^{k}}
{\(k-\frac {1} {2}\)^e}
\frac {1} {(3-e)!}\frac {\partial^{3-e}} {\partial \ep^{3-e}}
\Bigg\{\left(2k-2\ep-1\right)\\
\cdot
\sum _{0\le i_1\le i_2\le i_3\le n-k} ^{}
C(i_1,i_2,i_{3})\cdot
R_5(n,k,i_{2},i_{3};\ep)
\prod _{h=1} ^{M}t_h(n,k,i_1,i_2,i_{3};\ep)
\Bigg\}\Bigg\vert_{\ep=0},
\end{multline*}
where each $C(i_1,i_2,i_{3})$ is an integer and
each $t_h$ is an expression $R_1(\al,\be;\pm \ep+K)$
with $\al\ge \be$, an expression $R_1(\al,\be;\pm \ep)$ multiplied
by $\pm\ep$ with $\al< \be $, an expression $R_2(\al,\be;\pm \ep+K)$
with $\al\ge \be$, or one of $R_3(n,i_1,i_2,\ep)$ and
$R_4(n,i_3,\ep)$.

{
\allowdisplaybreaks
By Leibniz's formula, this last expression can be expanded into 
\begin{align} \notag
2^{4n}\dd_{2n}^2{\mathbf b}_n
&=-(-1)^n\sum _{e=1} ^{3}\sum _{k=1} ^{n}\frac {2(-1)^{k}\dd_{2n}^{e-1}}
{\(k-\frac {1} {2}\)^{e-1}}
\notag\\
\notag
&\kern1cm
\cdot\dd_{2n}^{3-e}\Bigg\{
\sum _{\ell_0+\dots+\ell_M=3-e} ^{}\frac {1}
{\ell_0!\,\ell_1!\cdots\ell_M!}
\sum _{0\le i_1\le i_2\le i_{3}\le n-k} ^{}
C(i_1,i_2,i_{3})\\
\notag
&\kern3cm
\cdot
\frac {\partial^{\ell_0}} {\partial\ep^{\ell_0}}
R_5(n,k,i_{2},i_{3};\ep)
\prod _{h=1} ^{M}\frac {\partial^{\ell_h}} {\partial\ep^{\ell_h}}
t_h(n,k,i_1,i_2,i_{3};\ep)
\Bigg\}\Bigg\vert_{\ep=0}\\
\notag
&+(-1)^n\sum _{e=1} ^{3}\sum _{k=1} ^{n}\frac {2(-1)^{k}\dd_{2n}^{e}}
{\(k-\frac {1} {2}\)^{e}}\\
&\kern1cm
\cdot
\dd_{2n}^{2-e}\Bigg\{
\sum _{\ell_0+\dots+\ell_M=2-e} ^{}\frac {1}
{\ell_0!\,\ell_1!\cdots\ell_M!}
\sum _{0\le i_1\le\dots\le i_{3}\le n-k} ^{}
C_2(i_1,i_2,i_{3})
\notag\\
&\kern3cm
\cdot
\frac {\partial^{\ell_0}} {\partial\ep^{\ell_0}}
R_5(n,k,i_{2},i_{3};\ep)
\prod _{h=1} ^{M}\frac {\partial^{\ell_h}} {\partial\ep^{\ell_h}}
t_h(n,k,i_1,i_2,i_{3};\ep)
\Bigg\}\Bigg\vert_{\ep=0}.
\label{eq:final1}
\end{align}}

Now, for any $h$ with $1\le h\le M$, we claim that
$$\frac {\dd_{2n}^{\ell_h}} {\ell_h!}\frac {\partial^{\ell_h}} {\partial\ep^{\ell_h}}
t_h(n,k,i_1,i_2,i_{3};\ep)\Big\vert_{\ep=0}
$$
is an integer. Indeed, if $t_h(n,k,i_1,i_2,i_{3};\ep)$ is one of the
elementary bricks $R_1(\dots)$ (possibly multiplied by $\pm\ep$) or
$R_2(\dots)$, then this follows directly from Lemmas~\ref{lem:R1} and
\ref{lem:R2}. If $t_h(n,k,i_1,i_2,i_{3};\ep)$ is one of the special
bricks $R_3(\dots)$ or $R_4(\dots)$, this can be seen directly.
Since $2(k-\frac {1} {2})$ divides $\dd_{2n}$, 
Identity~\eqref{eq:final1} would imply the assertion of the theorem
once we could prove that
\begin{equation} \label{eq:R5}
\frac {\dd_{2n}^{\ell_0}} {\ell_0!}\frac {\partial^{\ell_0}} {\partial\ep^{\ell_0}}
R_5(n,k,i_{2},i_{3};\ep)\Big\vert_{\ep=0}
\end{equation}
is an integer as well.

To accomplish this, we distinguish between two cases.
If $i_{2}=i_{3}$, then $R_5(n,k,i_{2},i_{3};\ep)$ can be factored
as follows:
\begin{align*}
R_5(n,k,i_{2},i_{3};\ep)
&=R_5(n,k,i_{3},i_{3};\ep)\\
&=2^{2(k-1)}
\frac {(1-2\ep)_{k+i_{3}}
\,(\frac {1} {2}+\ep)_{n-i_{3}-1}}
{(1-2\ep)_{k-1}\,(1-\ep)_{k+i_{3}}\,
(\frac {1} {2}+\ep)_{n-k-i_{3}}}\\
&= R_1(k+i_{3},0;1-2\ep)\\
&\kern1cm
\cdot(-\ep)\cdot R_1(0,k+i_{3}+1;-\ep)
\cdot R_2(k-1,0;n-k-i_3+\ep)\\
&\kern1cm
\cdot (-2\ep)\cdot R_1(0,k;-2\ep).
\end{align*}
Another application of Leibniz's formula and of
Lemmas~\ref{lem:R1} and \ref{lem:R2} show that \eqref{eq:R5} is an
integer for $i_{2}=i_{3}$.

If $i_{2}<i_{3}$, one observes that
$$R_5(n,k,i_{2},i_{3};\ep)=\ep\cdot
R_6(n,k,i_{2},i_{3};\ep),$$
where $R_6(\dots)$ is the special brick 
defined in Lemma~\ref{lem:briques1}.
Consequently, for $\ell_0\ge1$, we have
\begin{equation*}
\frac {1} {\ell_0!}\frac {\partial^{\ell_0}} {\partial\ep^{\ell_0}}
R_5(n,k,i_{2},i_{3};\ep)\Bigg\vert_{\ep=0}
=\frac {1} {(\ell_0-1)!}\frac {\partial^{\ell_0-1}} {\partial\ep^{\ell_0-1}}
R_6(n,k,i_{2},i_{3};\ep)\Bigg\vert_{\ep=0}.
\end{equation*}
The above relation together with Lemma~\ref{lem:briques1} with
$m_1=i_{3}$ and $m_2=i_{2}$ then shows that \eqref{eq:R5} is also an
integer for $i_{2}<i_{3}$.

This completes the proof of the theorem.
\end{proof}

\begin{lem} \label{lem:briques1}
Let
$$R_6(n,k,m_1,m_2;\ep)=2^{2(k-1)}\frac {(1+\ep)_{m_1-m_2-1}\,(1-2\ep)_{k+m_{2}}
\,(\frac {1} {2}+\ep)_{n-m_{1}-1}}
{(1-2\ep)_{k-1}\,(1-\ep)_{k+m_{1}}\,
(\frac {1} {2}+\ep)_{n-k-m_{1}}}.$$
Then, for all integers $n,k,m_1,m_2,H$ with $H\ge0$ and $0\le m_2<m_1\le n-k$, 
the number
\begin{equation} \label{eq:H0}
\textup{d}_{2n}^{H+1}\cdot \frac {1} {H!}\,
\frac {\partial ^H} {\partial \ep^H}
R_6(n,k,m_1,m_2;\ep)\Big\vert_{\ep=0}
\end{equation}
is an integer.
\end{lem}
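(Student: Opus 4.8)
The plan is to split $R_6(n,k,m_1,m_2;\ep)$ as a product of elementary bricks $R_1(\dots)$ and $R_2(\dots)$ (in the sense of Lemmas~\ref{lem:R1} and~\ref{lem:R2}), so that the claim follows by the Leibniz rule together with those two lemmas, exactly in the style of the $i_2=i_3$ case already treated in the proof of Theorem~\ref{thm:bn}. Concretely, I would write
$$
R_6(n,k,m_1,m_2;\ep)=\underbrace{(1+\ep)_{m_1-m_2-1}}_{(m_1-m_2-1)!\,R_1(m_1-m_2-1,1;1+\ep)}
\cdot\frac{(1-2\ep)_{k+m_2}}{(1-2\ep)_{k-1}}
\cdot\frac{(\tfrac12+\ep)_{n-m_1-1}}{(\tfrac12+\ep)_{n-k-m_1}}
\cdot\frac{1}{(1-\ep)_{k+m_1}}\,\cdot 2^{2(k-1)},
$$
and then identify each factor: $(1+\ep)_{m_1-m_2-1}=(m_1-m_2-1)!\,R_1(m_1-m_2-1,1;1+\ep)$ with $1\le m_1-m_2-1$ not guaranteed, so one must allow $m_1-m_2-1=0$ as the empty product $1$; $\dfrac{(1-2\ep)_{k+m_2}}{(1-2\ep)_{k-1}}=(1+k-1-2\ep)_{m_2+1}=R_1(k+m_2,k;1-2\ep)\cdot(m_2+1)!/1$ after a trivial index shift; $\dfrac{(\tfrac12+\ep)_{n-m_1-1}}{(\tfrac12+\ep)_{n-k-m_1}}$ is a ratio of shifted half-integer Pochhammer symbols that telescopes to $R_2(k-1,0;\,n-k-m_1+\ep)$ up to the factor $2^{-2(k-1)}$ (which cancels the $2^{2(k-1)}$ in front); and $\dfrac{1}{(1-\ep)_{k+m_1}}=(k+m_1-1)!^{-1}\cdot (k+m_1-1)!/(1-\ep)_{k+m_1}$, where $(k+m_1-1)!/(1-\ep)_{k+m_1}=R_1(-1,k+m_1;-\ep)$ fits the $\al<\be$ branch of $R_1$ once multiplied by a suitable power of $\ep$; since here $\al=-1$ and $k+m_1\ge 1$, this is $R_1(0,k+m_1;-\ep)$ divided by $(-\ep)$, i.e. $1/(1-\ep)_{k+m_1}=\big((-\ep)\,R_1(0,k+m_1+1;-\ep)\big)^{-1}$ — so I must instead keep $(-\ep)R_1(0,k+m_1+1;-\ep)=1/(1-\ep)_{k+m_1}$ directly as one of the bricks.

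Having expressed $R_6 = 2^{2(k-1)}\cdot\text{(product of explicit }R_1,R_2\text{ bricks)}$ with the $2^{2(k-1)}$ absorbed into an $R_2$, I would then apply the general Leibniz formula to
$\frac{1}{H!}\,\frac{\partial^H}{\partial\ep^H}\big(\prod_h t_h\big)\big\vert_{\ep=0}
=\sum_{\ell_1+\dots+\ell_r=H}\prod_h \frac{1}{\ell_h!}\frac{\partial^{\ell_h}}{\partial\ep^{\ell_h}}t_h\big\vert_{\ep=0}$.
For each factor $t_h$, Lemma~\ref{lem:R1} or Lemma~\ref{lem:R2} says that $\dd_{D_h}^{\ell_h}\cdot\frac{1}{\ell_h!}\partial_\ep^{\ell_h}t_h|_{\ep=0}$ is an integer for an appropriate "length" $D_h$ — here $D_h\in\{m_1-m_2-1,\ m_2+1,\ 2(k-1),\ k+m_1\}$ for the respective bricks (after taking into account the $\pm\ep$, $\pm2\ep$ arguments, which at most double the relevant length). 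Each of these lengths is at most $2n$ under the hypothesis $0\le m_2<m_1\le n-k$ (so $k+m_1\le n$, $m_2+1\le n$, $2(k-1)\le 2n$, $m_1-m_2-1\le n$), hence $\dd_{D_h}\mid \dd_{2n}$ and $\dd_{2n}^{\ell_h}\cdot\frac{1}{\ell_h!}\partial_\ep^{\ell_h}t_h|_{\ep=0}\in\mathbb Z$. Multiplying over $h$ and summing over the compositions $\ell_1+\dots+\ell_r=H$ shows $\dd_{2n}^{H}\cdot\frac1{H!}\partial_\ep^H R_6|_{\ep=0}\in\mathbb Z$; the extra factor $\dd_{2n}$ in \eqref{eq:H0} is then a safe over-estimate (it is needed in the argument of Theorem~\ref{thm:bn} where $R_6$ arises from $R_5$ with one derivative already spent, i.e. effectively $H\mapsto H+1$).

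The main obstacle I anticipate is purely bookkeeping: getting the brick decomposition exactly right, in particular handling the $\al<\be$ branch of $R_1$ — the factors $1/(1-\ep)_{k+m_1}$ and $1/(1-2\ep)_{k-1}$ must be paired with the compensating $\pm\ep$ powers so that they literally match the form $(\pm\ep)\cdot R_1(0,\be;\pm\ep)$ or $(\pm 2\ep)\cdot R_1(0,\be;\pm 2\ep)$ allowed by the lemma, and one must check that no such $\ep$-factor is "used up" incorrectly (there are exactly two negative Pochhammer denominators and they are cancelled by $(1+\ep)_{m_1-m_2-1}$-type numerators only partially, so the residual $\ep$-powers must be tracked). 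A secondary subtlety is the boundary case $m_1-m_2-1=0$, where the numerator $(1+\ep)_{m_1-m_2-1}$ is the empty product and the decomposition degenerates; this must be noted but causes no difficulty since then $R_6$ is simply a product of three bricks. Once the decomposition is pinned down, the rest is a verbatim repetition of the Leibniz-plus-$\dd_{2n}$-divisibility argument already used twice in the proof of Theorem~\ref{thm:bn}, so no new idea is required.
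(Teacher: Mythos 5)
Your approach diverges fundamentally from the paper's: the paper does \emph{not} decompose $R_6$ into elementary bricks at all. Instead it observes that $\frac{1}{H!}\frac{\partial^H}{\partial\ep^H}R_6\big|_{\ep=0}$ is a sum of terms
$2^{2(k-1)}\frac{(m_1-m_2-1)!\,(k+m_2)!\,(1/2)_{n-m_1-1}}{(k-1)!\,(k+m_1)!\,(1/2)_{n-k-m_1}}\cdot\frac{1}{f_1\cdots f_H}$ with $1\le f_h\le 2n$, and proves directly that the $p$-adic valuation of $\dd_{2n}^{H+1}$ times any such term is non-negative for every prime $p$, by showing that each summand in the resulting sum of floor-function differences is at least $-1$. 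Your brick decomposition, by contrast, does not close, and the obstruction is not mere bookkeeping. Tracking the constants honestly: $(1+\ep)_{m_1-m_2-1}=(m_1-m_2-1)!\,R_1(m_1-m_2-1,0;1+\ep)$, $(1-2\ep)_{k+m_2}/(1-2\ep)_{k-1}=(m_2+1)!\,R_1(k+m_2,k-1;1-2\ep)$, $2^{2(k-1)}(\tfrac12+\ep)_{n-m_1-1}/(\tfrac12+\ep)_{n-k-m_1}=(k-1)!\,R_2(k-1,0;n-k-m_1+1+\ep)$, and, crucially, $(-\ep)R_1(0,k+m_1+1;-\ep)=(k+m_1)!/(1-\ep)_{k+m_1}$, \emph{not} $1/(1-\ep)_{k+m_1}$. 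Hence $R_6$ equals the product of your four bricks times the constant $\frac{(m_1-m_2-1)!\,(m_2+1)!\,(k-1)!}{(k+m_1)!}$, whose three factorial lengths in the numerator sum only to $k+m_1-1$, so this constant is not an integer in general. Leibniz plus Lemmas~\ref{lem:R1} and \ref{lem:R2} then gives integrality only up to this denominator. This is exactly why the $i_2=i_3$ case of $R_5$ in the proof of Theorem~\ref{thm:bn} \emph{can} be handled by bricks (there the factorials cancel identically), while the case $i_2<i_3$, i.e.\ $R_6$, cannot.

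Your concluding claim that $\dd_{2n}^{H}\cdot\frac{1}{H!}\frac{\partial^H}{\partial\ep^H}R_6\big|_{\ep=0}$ is already an integer, so that the extra factor $\dd_{2n}$ in \eqref{eq:H0} is ``a safe over-estimate,'' is false. Take $H=0$, $n=2$, $k=1$, $m_1=1$, $m_2=0$ (admissible, since $0\le m_2<m_1\le n-k$); then $R_6(2,1,1,0;0)=\frac{(1)_0\,(1)_1\,(1/2)_0}{(1)_0\,(1)_2\,(1/2)_0}=\frac12$, not an integer, while $\dd_4\cdot\frac12=6$ is. The single extra power of $\dd_{2n}$ is therefore essential, and it is precisely what the paper's valuation argument spends: the term $(H+1)[\log_p(2n)]$ supplies one unit of $p$-adic valuation beyond the $H$ units consumed by the factors $1/f_h$, and that one unit absorbs the possible deficit of $-1$ in each floor-function summand. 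To repair your argument you would either need a decomposition in which the factorial constants genuinely cancel --- which cannot exist, since the counterexample shows $R_6|_{\ep=0}$ itself can be a half-integer --- or you must switch to the paper's prime-by-prime valuation estimate.
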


\begin{proof}
We loosely follow analogous arguments in the proof of 
\cite[Lemme~11]{KrRiAA}.
In fact, the arguments given in the last paragraph here show that that
proof could have been simplified.

We shall show that, for all integers $1\le f_1\le
f_2\le\dots\le f_{H+1}\le 2n$, the number
\begin{equation} \label{eq:H2}
\textup{d}_{2n}^{H+1}\cdot \frac {1} {H!}\,
2^{2(k-1)}\frac {(m_1-m_2-1)!\,(k+m_{2})!
\,(\frac {1} {2})_{n-m_{1}-1}}
{(k-1)!\,(k+m_{1})!\,
(\frac {1} {2})_{n-k-m_{1}}}
\frac {1} {f_1f_2\cdots f_{H}}
\end{equation}
is an integer. In view of the definition of $R_6(n,k,m_1,m_2;\ep)$,
this implies that \eqref{eq:H0} is an integer.

We prove the above claim by verifying that the $p$-adic valuation of
\eqref{eq:H2} is non-negative for all prime numbers $p$.
Writing $[\al]$ for the greatest integer less than or equal to $\al$,
this $p$-adic valuation is equal to
\begin{multline} \label{eq:H3}
(H+1)\cdot[\log_p(2n)]+
\sum _{\ell=1} ^{\infty}\bigg(
\left[\frac {k+m_2} {p^\ell}\right]
+\left[\frac {m_1-m_2-1} {p^\ell}\right]
+\left[\frac {2n-2m_1-2} {p^\ell}\right]
-\left[\frac {n-m_1-1} {p^\ell}\right]
\\
-\left[\frac {k-1} {p^\ell}\right]
-\left[\frac {k+m_1} {p^\ell}\right]
-\left[\frac {2n-2k-2m_1} {p^\ell}\right]
+\left[\frac {n-k-m_1} {p^\ell}\right]
\bigg)
-
\sum _{h=1} ^{H}v_p(f_h)
\end{multline}
for {\it any} prime number $p$ (also for $p=2$!).
If $p>2n$, it is obvious that this expression is non-negative since all
terms vanish. Hence, from now on we assume that $p\le 2n$.

In fact, the conditions on $k,n,m_1,m_2$ imply that the terms of the
infinite series in \eqref{eq:H3} vanish for $\ell>[\log_p(2n)]$. 
The expression \eqref{eq:H3} can therefore be rewritten in the form
\begin{multline} \label{eq:H4}
[\log_p(2n)]+\sum _{\ell=1} ^{[\log_p(2n)]}\bigg(
\left[\frac {k+m_2} {p^\ell}\right]
+\left[\frac {m_1-m_2-1} {p^\ell}\right]
+\left[\frac {2n-2m_1-2} {p^\ell}\right]
-\left[\frac {n-m_1-1} {p^\ell}\right]
\\
-\left[\frac {k-1} {p^\ell}\right]
-\left[\frac {k+m_1} {p^\ell}\right]
-\left[\frac {2n-2k-2m_1} {p^\ell}\right]
+\left[\frac {n-k-m_1} {p^\ell}\right]
\bigg)
-
\sum _{h=1} ^{H}\(v_p(f_h)-[\log_p(2n)]\).
\end{multline}
Since, by definition, $1\le f_h\le 2n$ for all $h$, the terms in the
sum over $h$ are non-positive. Hence, it suffices to show that the
summands in the sum over $\ell$ are all at least $-1$.

In order to accomplish this, we write $N=\{n/p^\ell\}$, $K=\{k/p^\ell\}$,
$M_1=\{m_{1}/p^\ell\}$, $M_2=\{m_{2}/p^\ell\}$ for the fractional
parts of $n/p^\ell$, $k/p^\ell$, $m_{1}/p^\ell$ and $m_{2}/p^\ell$,
respectively.
With these notations, the summand of the sum over $\ell$ becomes
\begin{multline} \label{eq:H5}
\left[ K+M_2\right]
+\left[ M_1-M_2-\frac {1} {p^\ell}\right]
+\(\left[2N-2M_1-\frac {2} {p^\ell}\right]
-\left[N-M_1-\frac {1} {p^\ell}\right]\)
\\
-\left[K-\frac {1} {p^\ell}\right]
-\left[K+M_1\right]
-\big(\left[2N-2K-2M_1\right]
-\left[N-K-M_1\right]\big).
\end{multline}

We first discuss the case $K=0$. For this special choice of $K$,
the expression in \eqref{eq:H5} reduces to
\begin{multline} \label{eq:H5a}
\left[ M_1-M_2-\frac {1} {p^\ell}\right]
+\(\left[2N-2M_1-\frac {2} {p^\ell}\right]
-\left[N-M_1-\frac {1} {p^\ell}\right]\)
\\
+1-\big(\left[2N-2M_1\right]
-\left[N-M_1\right]\big).
\end{multline}
Since, by elementary properties of the (weakly) increasing
function $x\mapsto [2x]-[x]$, we have
$$
\(\left[2N-2M_1-\frac {2} {p^\ell}\right]
-\left[N-M_1-\frac {1} {p^\ell}\right]\)
-\big(\left[2N-2M_1\right]
-\left[N-M_1\right]\big)\ge-1,
$$
the expression in \eqref{eq:H5a} is indeed $\ge-1$.

From now on let $K>0$, i.e., $K\ge \frac {1} {p^\ell}$.
In this case, clearly, $\left[K-\frac {1} {p^\ell}\right]=0$ and 
$$
\(\left[2N-2M_1-\frac {2} {p^\ell}\right]
-\left[N-M_1-\frac {1} {p^\ell}\right]\)
-\big(\left[2N-2K-2M_1\right]
-\left[N-K-M_1\right]\big)\ge0.
$$
Hence, if the expression in \eqref{eq:H5} wants to be $\le-2$, then
we must have $[K+M_2]=0$, $\left[M_1-M_2-\frac {1} {p^\ell}\right]=-1$ and
$[K+M_1]=1$, that is
\begin{align} \label{eq:H6a}
K+M_2&<1,\\
M_1-M_2-\frac {1} {p^\ell}&<0,\label{eq:H6b}\\
K+M_1&\ge1.\label{eq:H6c}
\end{align}
But a combination of \eqref{eq:H6a} and \eqref{eq:H6c} yields
$M_1-M_2>0$, which contradicts \eqref{eq:H6b} since the denominators
of the rational numbers $M_1$ and $M_2$ are both $p^\ell$.
\end{proof}

\end{document}